\def\bs{\boldsymbol}
\newtheorem{lemma}{Lemma}
\author[1,2,3]{\emph{Li-Chun Zhang}}
\affil[1]{\emph{Statistisk sentralbyraa, Norway}}
\affil[2]{\emph{University of Southampton, UK (L.Zhang@soton.ac.uk)}}
\affil[3]{\emph{Universitetet i Oslo, Norway}}
\title{Graph sampling by lagged random walk}
\date{}
\begin{document}

\maketitle

\begin{abstract} We propose a family of lagged random walk sampling methods in simple undirected graphs, where transition to the next state (i.e. node) depends on both the current and previous states---hence, lagged. The existing random walk sampling methods can be incorporated as special cases. We develop a novel approach to estimation based on lagged random walks at equilibrium, where the target parameter can be any function of values associated with finite-order subgraphs, such as edge, triangle, 4-cycle and others.
\end{abstract}

\noindent \emph{Key words:} random jump, stationary distribution, non-Markovian process, generalised ratio estimator, capture-recapture estimator

\section{Introduction}

Let $G = (U, A)$ be a simple undirected graph, with known node set $U$ and \emph{order} $N = |U|$, but unknown edge set $A$ or \emph{size} $R = |A|$. Discrete-time random walk (RW) in $G$ can only move from the current state $X_t =i$ to the next $X_{t+1} =j$ if $(ij)\in A$, where $i, j\in U$, such that the walk is confined within the component of $G$ where the initial state $X_0$ is located, given any $X_0 \in U$. For a \emph{targeted random walk (TRW,} Thompsn, 2006), the transition probabilities $\Pr(X_{t+1} =j | X_t = i)$ at each time step $t$ can be subject to other devices, such as random jumps or an acceptance-rejection mechanism for the proposed moves. The TRW stationary distribution has many applications to large often dynamic networks, such as PageRank (Brin and Page, 1998); see Masuda et al. (2017) and Newman (2010) for reviews. 

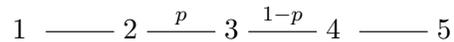
\begin{figure}[ht] 
\centering
\begin{tikzcd}[cramped]
1 \arrow[r, dash] & 2 \arrow[r, dash, "p"] & 3 \arrow[r, dash, "1-p"] & 4 \arrow[r, dash] & 5  
\end{tikzcd} 
\caption{Lagged random walk on a subway line, $(X_{t-1}, X_t) = (2,3)$} \label{fig:path}
\end{figure}

We call a random walk \emph{lagged} if the transition probabilities at time step $t$ are instead given by $\Pr(X_{t+1} =j | X_t = h, X_{t-1} =i)$. The process $\{ X_t : t\geq 0\}$ is non-Markovian when the states are generated by a \emph{lagged random walk (LRW)}. Figure \ref{fig:path} provides a simple setting of LRW on a subway line of 5 stations. Fixing $p=0.5$ for $X_t \in \{ 2, 3, 4\}$ yields pure RW, whereas normal subway service corresponds to
\[
p = \Pr(X_{t+1} = i \mid X_t =h, X_{t-1}=i) = 0
\]
for any $h \in \{ 2, 3, 4\}$. The LRW has a tendency to persist in the same travel direction if $p <0.5$, to backtrack to the previous station if $0.5 <p <1$, or stucks between two stations if $p=1$. 

Generally, by an extra parameter that controls the tendency for backtracking in arbitrary graphs, LRW yields a family of sampling methods which includes TRW as a special case. 

We are interested in the estimation of graph parameters as defined in Zhang and Patone (2017) and Zhang (2021). Let $G(M)$ be a subgraph induced by $M$, with node set $M$, $M\subset U$, and edge set $A\cap \{ (ij) : i,j\in M\}$. The specific characteristics of $G(M)$ is called \emph{motif}, denoted by $[M]$, and the order of $[M]$ is $|M|$, i.e. that of $G(M)$. Figure \ref{fig:motif} illustrates some low-order motifs. For instance, edge is a 2nd-order motif which can be given by $[\{ i,j\} : a_{ij} =1]$, where $a_{ij} =1$ if $(ij)\in A$ and 0 otherwise, for $i,j\in U$; any particular edge in $A$ is an \emph{occurrence} of the edge motif in $G$. Or, triangle is a 3rd-order motif given by $[\{ i,j,k\} : a_{ij} a_{jk} a_{ki} =1]$, where $i,j,k\in U$; any particular triangle in $G$ is an occurrence of the triangle motif. 

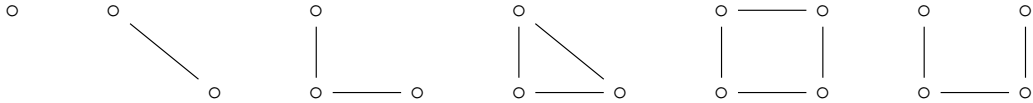
\begin{figure}[ht]
\centering
\begin{tikzcd}[cramped]
\circ & \circ \arrow[dr, dash] & & \circ \arrow[d, dash] & & \circ \arrow[d, dash] \arrow[dr, dash] &  
& \circ \arrow[d, dash] \arrow[r, dash] & \circ & \circ \arrow[d, dash] & \circ \\
& & \circ& \circ \arrow[r, dash] & \circ & \circ \arrow[r, dash] & \circ 
& \circ & \circ \arrow[l, dash] \arrow[u, dash] & \circ & \circ \arrow[l, dash] \arrow[u, dash]
\end{tikzcd} 
\caption{From left to right, motif node, edge, 2-star, triangle, 4-cycle and 3-path} \label{fig:motif}
\end{figure} 

Let $\Omega$ contain all the occurrences of the motifs of interest in $G$. For each $\kappa \in \Omega$, let $y_{\kappa}$ be a fixed value associated with $\kappa$. The corresponding \emph{graph total}, denoted by $\theta$, is given by
\begin{equation} \label{theta}
\theta = \sum_{\kappa \in \Omega} y_{\kappa}
\end{equation}
whereas we refer to any function of $\{ y_{\kappa} : \kappa \in \Omega\}$ as a \emph{graph parameter}, denoted by $\mu$. 

For instance, let $\Omega = U$, and let $y_{\kappa} = 1$ or 0 be a binary indicator of the type of node $\kappa$, then $\theta$ by \eqref{theta} is the total number of type-1 nodes in $G$, and $\mu = \theta/N$ is a 1st-order graph parameter measuring their prevalence. 
Let $\Omega = A$ and $y_{\kappa} =1$ for any $\kappa \in A$, then $\theta =R$ is the size of $G$. Let $\Omega$ contain all the triangles and 2-stars in $G$, and let $\theta$ be the graph total of triangles and $\theta'$ that of 2-stars, then $\mu = \theta/(\theta + \theta')$ is a measure of the transitivity of $G$. 

Previously, Thompson (2006) and Avrachenkov et al. (2010) have considered the estimation of 1st-order graph parameters based on TRW sampling. Below, in Section \ref{sampling}, we propose a more general family of LRW sampling methods. We develop an approach to estimating finite-order graph parameters and totals as defined above in Section \ref{estimation}. Numerical illustrations are given in Section \ref{illustration}, and some remarks on future research in Section \ref{remark}.

\section{LRW sampling} \label{sampling}

Let $d_i$ be the degree of node $i$ in $U$. Let $r>0$ and $0\leq w \leq 1$ be two chosen constants regulating random jump and backtracking, respectively. Let the transition probability of LRW be 
\begin{align}
& \Pr(X_t =j \mid X_t =h, X_{t-1} =i) \notag \\
& \hspace{10mm} = \begin{cases} \frac{r}{d_h+r} \big( \frac{1}{N} \big) + \frac{a_{hj}}{d_h +r} & \text{if } d_h = 1 \\
\frac{r}{d_h+r} \big( \frac{1}{N} \big) + \mathbb{I}(j=i) \frac{w a_{hj}}{d_h +r} 
+ \mathbb{I}(j\neq i) \frac{d_h - w a_{ih}}{d_h +r} \frac{a_{hj}}{d_h - a_{ih}} & \text{if } d_h > 1 
\end{cases} \label{LRW}
\end{align}
That is, the walk either jumps randomly to any node $j$ in $U$--including the current $X_t$--with probability $r/(d_h+r)$, or it moves to an adjacent node $j$ with probability $d_h/(d_h+r)$. If $d_h =1$, then there is only one adjacent $j$; if $d_h >1$, then it either backtracks to the previous $X_{t-1} =i$ with probability $w/(d_h+r)$, or it moves randomly to one of the other adjacent nodes.

\subsection{Stationary distribution}

TRW of Avrachenkov et al. (2010) is the special case of LRW by \eqref{LRW} given $w=1$, where the previous state $X_{t-1}$ (if adjacent) is treated as any other adjacent nodes at step $t$ and $\{ X_t : t\geq 0\}$ form a Markov chain. The stationary probability is given by 
\begin{equation} \label{pih}
\pi_h \coloneqq \Pr(X_t = h) = \frac{d_h +r}{2R + rN}
\end{equation}

The process $\{ X_t : t\geq 0\}$ is non-Markovian if $w<1$. However, let $\bs{x}_t = (X_{t-1}, X_t)$ for $t\geq 1$. Given any initial $\bs{x}_1 = (X_0, X_1)$, LRW \eqref{LRW} generates a Markov chain $\{ \bs{x}_t : t\geq 1\}$, since
\[
\Pr(\bs{x}_{t+1} | \bs{x}_t, ..., \bs{x}_1) = \Pr(\bs{x}_{t+1} | \bs{x}_t) = \Pr(X_{t+1} | \bs{x}_t)
\] 
Due to random jumps, this Markov chain is irreducible, such that there exists a unique stationary distribution, denoted by
\[
\pi_{\bs{x}} \coloneqq \Pr(\bs{x}_t = \bs{x}) \qquad\text{and}\qquad \sum_{\bs{x}} \pi_{\bs{x}} = 1
\]
A unique stationary distribution of $X_t$ follows, which satisfies the \emph{mixed} equation
\begin{equation} \label{mix}
\pi_h = \sum_{i\in U}  \Pr\big( \bs{x}_t = (i,h)\big) 
= \sum_{\substack{\bs{x} = (i,h)\\ i\in \nu_h}} \pi_{\bs{x}} + \sum_{i\not \in \nu_h}  \frac{\pi_i}{d_i +r} \big( \frac{r}{N} \big)
\end{equation}
where $\nu_h = \{ i\in U: a_{ih} =1\}$ is its neighbourhood of adjacent nodes, and the transition to $h$ from any node outside $\nu_h$ can only be accomplished by a random jump. 

\begin{lemma} 
For LRW by \eqref{LRW} with any $0\leq w\leq 1$ in undirected simple graphs, the stationary probability $\pi_h$ is given by \eqref{pih}
for any $h\in U$, and $\pi_{\bs{x}}$ for any $\bs{x} = (i,h)$ is given by
\[
\pi_{\bs{x}} \propto \begin{cases} 1 + r/N & \text{if } i\in \nu_h \\ r/N & \text{if } i\not \in \nu_h \end{cases}
\]
\end{lemma}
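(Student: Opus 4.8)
The plan is to verify the two claimed formulas by checking the stationarity (balance) equations for the Markov chain $\{\bs{x}_t : t \geq 1\}$ directly, and then deduce the marginal $\pi_h$ by summing. Since the chain on ordered pairs is irreducible with a unique stationary distribution, it suffices to exhibit a candidate $\pi_{\bs{x}}$ satisfying $\pi_{\bs{x}} = \sum_{\bs{x}'} \pi_{\bs{x}'} \Pr(\bs{x}_{t+1} = \bs{x} \mid \bs{x}_t = \bs{x}')$ and then normalise. I would take the candidate $\tilde\pi_{(i,h)} = (1 + r/N)/Z$ if $i \in \nu_h$ and $r/(NZ)$ if $i \notin \nu_h$, with $Z$ the normalising constant, and confirm it is fixed by the transition kernel.

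First I would write out, for a fixed target pair $\bs{x} = (i,h)$, which pairs $\bs{x}' = (g,i)$ can transition into it: the walk must be at $i$ at time $t$ and move to $h$ at time $t+1$, so $\bs{x}'$ ranges over all $(g,i)$ with $g \in U$. Using \eqref{LRW} I would split the incoming mass according to whether the move $i \to h$ is a random jump (contributing $\frac{r}{d_i+r}\frac{1}{N}$ regardless of $g$), a backtrack (possible only when $g = h$ and $h \in \nu_i$), or a forward step to a non-previous neighbour (when $h \in \nu_i$, $g \neq h$). The key simplification is that the coefficient $\frac{d_i - w a_{gi}}{d_i+r}\cdot\frac{a_{ih}}{d_i - a_{gi}}$ takes different forms depending on whether $g \in \nu_i$, but summing $\tilde\pi_{(g,i)}$ against these coefficients over all $g$ should telescope the $w$-dependence away — this is exactly the cancellation that makes $\pi_h$ independent of $w$. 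I expect the main work (and the main obstacle) to be this bookkeeping: carefully partitioning the sum over $g$ into $\{g = h\}$, $\{g \in \nu_i \setminus \{h\}\}$, and $\{g \notin \nu_i\}$, plugging in the two-valued candidate $\tilde\pi$, and checking that the $w$-terms cancel and the total collapses to the claimed value. It helps to note that $\sum_{g \in \nu_i} \tilde\pi_{(g,i)} = d_i(1+r/N)/Z$ and $\sum_{g \notin \nu_i} \tilde\pi_{(g,i)} = (N - d_i)(r/N)/Z$, so the row sum $\sum_g \tilde\pi_{(g,i)} = (d_i + r)/Z$ is proportional to $\pi_i$ in \eqref{pih}; this identity should be the engine of the verification.

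Once $\pi_{\bs{x}}$ is confirmed, the marginal follows from the mixed equation \eqref{mix}: summing the candidate over $i \in \nu_h$ gives $d_h(1+r/N)/Z$ and the random-jump term contributes $\big(\sum_{i \notin \nu_h} \pi_i/(d_i+r)\big)(r/N)$. Using the already-established row-sum identity $\sum_g \tilde\pi_{(g,i)} = (d_i+r)/Z \propto \pi_i$ lets me evaluate $\pi_i/(d_i+r)$ as a constant, so the second term becomes $(N - d_h)(r/N)/Z$, and the two pieces add to $(d_h + r)/Z$, i.e. $\pi_h \propto d_h + r$. Finally I would compute $Z$ by summing $\tilde\pi_{\bs{x}}$ over all ordered pairs: there are $2R$ adjacent ordered pairs, each weighted $1 + r/N$, and $N^2 - 2R$ non-adjacent ones (including $i = h$ when $d_h \geq$ anything, since a random jump can land on the current node), each weighted $r/N$, giving $Z = 2R(1+r/N) + (N^2 - 2R)(r/N) = 2R + rN$; hence $\pi_h = (d_h+r)/(2R+rN)$, matching \eqref{pih}. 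The one subtlety to flag is the $d_h = 1$ branch of \eqref{LRW}, where there is no backtrack suppression (the single neighbour is always reachable with weight $1/(d_h+r)$); I would check separately that the balance equation still holds there, which it should since setting $w$ effectively to $1$ in that case is exactly the degenerate instance of the $d_h > 1$ formula.
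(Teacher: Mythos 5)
Your approach is correct, and it goes through exactly as you expect: for a target pair $(i,h)$ with $a_{ih}=1$ and $d_i>1$, the incoming adjacent-move mass is $\frac{c(1+r/N)}{d_i+r}\bigl[w+(d_i-1)\frac{d_i-w}{d_i-1}\bigr]+\frac{c\,r(N-d_i)}{N(d_i+r)}=c$, so the $w$-terms cancel as you predicted, and together with the uniform jump term $c\,r/N$ this reproduces $c(1+r/N)$; the non-adjacent and $d_i=1$ cases check out in the same way, and your normalisation $Z=2R(1+r/N)+(N^2-2R)(r/N)=2R+rN$ is right. However, your route differs from the paper's. The paper does not verify stationarity of the pair chain against an explicit candidate; instead it argues by balancing \emph{flows} through triples $(X_{t-1},X_t,X_{t+1})$ centred at $h$, split into three adjacency configurations (both ends adjacent to $h$, neither adjacent, exactly one adjacent), postulates the node-level values $p_h=d_h+r$, uses the mixed equation \eqref{mix} to close the argument in the third case, and only obtains the pair-level $\pi_{\bs{x}}$ at the end ``by symmetry.'' Your argument inverts this order: you take the two-valued pair distribution as the primary object, verify the global balance equations of the Markov chain $\{\bs{x}_t\}$ directly (the key engine being the row-sum identity $\sum_g\tilde\pi_{(g,i)}=(d_i+r)/Z$), and then read off the marginal by summing over the first coordinate — indeed you do not even need the detour through \eqref{mix}, since $\pi_h=\sum_i\pi_{(i,h)}$ gives $(d_h+r)/Z$ immediately. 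What your approach buys is a fully standard and airtight fixed-point verification, including an explicit normalising constant, whereas the paper's flow-balancing in situation I (``altogether these flows are balanced'') and its group-wise summation in situation III are looser; what the paper's approach buys is some structural insight into which transitions must balance and why the answer cannot depend on $w$. The only thing still to do on your side is to write out the bookkeeping you sketch (the partition over $g=h$, $g\in\nu_i\setminus\{h\}$, $g\notin\nu_i$, and the separate $d_i=1$ and non-adjacent cases) — but every step you flag does work.
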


\begin{proof} Under LRW \eqref{LRW}, a flow between $\bs{x}_t = (i,h)$ and $\bs{x}_{t+1} = (h,j)$ in either direction is a flow over $(X_{t-1}, X_t, X_{t+1})$ in the same direction. To show the values $\{ p_h = d_h+r : h\in U\}$ satisfy the balanced flows through $(X_{t-1}, X_t, X_{t+1})$ at equilibrium, one needs to consider three situations. 
\begin{center}
\begin{tikzcd}[cramped]
\text{I.} & (i = j) \arrow[r,dash] & h &\hspace{20mm} i \arrow[r,dash] & h \arrow[r,dash] & j  
\end{tikzcd}
\end{center}
I. $\{ i, j\}\in \nu_h$, as illustrated. If $i= j$, then $(i,h,j)$ and $(j,h,i)$ are backtracking in either direction, the probability of which is the same by \eqref{LRW}, so that they are always balanced. If $i\neq j$, then neither $(i,h,j)$ nor $(j,h,i)$ is backtracking, with the same transition probability but $\pi_{\bs{x}_t}$ differ to $\pi_{\bs{x}_{t+1}}$ generally. However, $\bs{x} = (i,h)$ for any given $i\in \nu_h$ is involved in one flow in \emph{either} direction, such that \emph{altogether} these flows are balanced.  
\begin{center}
\begin{tikzcd}[cramped]
\text{II.} & i & h & j 
\end{tikzcd}
\end{center}
II. $\{ i, j\}\not \in \nu_h$, as illustrated, including $i=h$ or $j=h$, in which case $(i,h,j)$ and $(j,h,i)$ can only take place by random jumps, which are balanced given $p_h = d_h +r$, $\forall h\in U$, since
\[
p_i \frac{r}{d_i +r} \big( \frac{1}{N} \big) \frac{r}{d_h +r} \big( \frac{1}{N} \big) = 
p_j \frac{r}{d_j +r} \big( \frac{1}{N} \big) \frac{r}{d_h +r} \big( \frac{1}{N} \big) 
\] 
\begin{center}
\begin{tikzcd}[cramped]
\text{III.} & i \arrow[r,dash] & h & j &\hspace{20mm} i \arrow[r,dash] & (h = j)  
\end{tikzcd}
\end{center}
III. One of $(i,j)$, say, $i$ belongs to $\nu_h$ but not the other, including when $j=h$, as illustrated.
Setting $p_i = d_i +r$ for $\pi_i$ in the second term on the right-hand side of \eqref{mix}, we have 
\[
\big( \pi_h -  \sum_{\substack{\bs{x} = (i,h)\\ i\in \nu_h}} \pi_{\bs{x}} \big) \big( \frac{d_h}{d_h+r} + \frac{r}{d_h +r} \frac{d_h}{N} \big) 
= r \frac{N -d_h}{N} \big( \frac{d_h}{d_h+r} + \frac{r}{d_h +r} \frac{d_h}{N} \big) 
\]
i.e. summing over \emph{all} possible flows $(j,h,i)$ where $j\not \in \nu_h$ and $i\in \nu_h$. Whereas, summing over all the possible flows $(i,h,j)$ in the other direction, we have 
\[
\big( \sum_{\substack{\bs{x} = (i,h)\\ i\in \nu_h}} \pi_{\bs{x}} \big) \frac{r}{d_h +r} \big( \frac{N-d_h}{N} \big) 
\]
Balancing (i.e. equating) the two groups of flows at equilibrium yields
\[
\sum_{\substack{\bs{x} = (i,h)\\ i\in \nu_h}} \pi_{\bs{x}} = d_h + r \frac{d_h}{N} 
\] 
Replacing this into the right-hand side of \eqref{mix}, we obtain $p_h = d_h + r$. Summarising all the three situations, the values $p_h$ satisfy the balanced flows, so that $\pi_h \propto d_h +r$ as in \eqref{pih}. Finally, $\pi_{\bs{x}}$ for any $\bs{x} = (i,h)$ follows from \eqref{mix} by symmetry.  
\end{proof}

\subsection{Sample graph by LRW}

Zhang (2021) provides a general definition of sample graph as a subgraph of $G$. We apply it to LRW sampling as follows. By the \emph{observation procedure (OP)} of LRW, we observe all the edges incident to $X_t =i$ at step $t$, i.e. the entire row and column of the adjacency matrix of $G$ corresponding to node $i$. Let $s = \{ X_0, ..., X_T\}$ be the \emph{seed sample} of LRW, i.e. the nodes to which the OP is applied. For any given $s$, the observed part of the adjacency matrix can be specified as $s\times U \cup U\times s$, such that the sampled (or observed) edges are given by
\[
A_s = A \cap \big( s\times U \cup U\times s \big) 
\]
The \emph{sample graph} by LRW, denoted by $G_s$, is given by
\begin{equation} \label{Gs}
G_s = (U_s, A_s) \qquad\text{where}\qquad U_s = s \cup \mbox{Inc}(A_s) 
\end{equation}
i.e. the node sample is the union of $s$ and all the nodes incident to the edges in $A_s$.

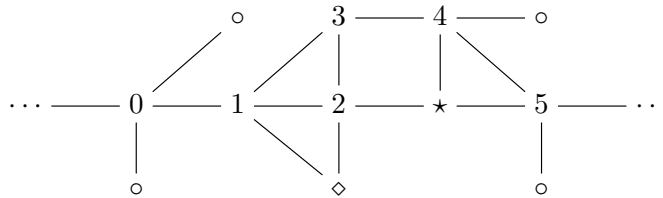
\begin{figure}[ht]
\centering 
\begin{tikzcd}[cramped]
& & \circ & 3 \arrow[r, dash] & 4 \arrow[dr, dash] \arrow[r, dash] \arrow[d, dash] & \circ \\
\cdots \arrow[r, dash] & 0 \arrow[r, dash] \arrow[d, dash] \arrow[ur, dash] 
& 1 \arrow[ur, dash] \arrow[r, dash] \arrow[dr, dash]
& 2 \arrow[u, dash] \arrow[r, dash] \arrow[d, dash] & \star \arrow[r, dash] & 5 \arrow[r, dash] \arrow[d, dash] & \cdots \\
& \circ & & \diamond & & \circ &
\end{tikzcd}
\caption{Illustration of sample observation by LRW} \label{fig:LRW} 
\end{figure} 

Figure \ref{fig:LRW} provides an illustration of sample observation by LRW. Suppose $X_{t+q} = q$ for $q=0, 1, ..., 5$. In addition to the nodes $\{ X_t, ..., X_{t+5}\}$ we observe many other motifs, such as edge $[\{ 0,1\}]$ at $X_t =0$ and again at $X_{t+1} =1$, 
triangle $[\{ 1,2,3\}]$ and $[\{ 1,2,\diamond \}]$ given $(X_{t+1}, X_{t+2}) = (1,2)$ and $[\{ 1, 2,3 \}]$ again given $(X_{t+2}, X_{t+3}) = (2,3)$,  4-cycle $[\{ 2,3,4,\star \}]$ given $(X_{t+2}, X_{t+3}, X_{t+4}) = (2,3,4)$. All these motifs are observed in the sample graph $G_s$ by LRW. Denote by $\Omega_s$ the elements of $\Omega$ in \eqref{theta} which are observed in $G_s$.

\section{Estimation} \label{estimation}

We consider first the estimation of $R$, i.e. the size of $G$, hence the stationary probability \eqref{pih} by LRW sampling. Next, we consider other finite-order graph parameters and totals. In applications one can use the mean of a given estimator from any number, say, $K$ independent LRWs as the final estimator, and use the empirical variance of the $K$ estimators for variance estimation.

\subsection{Size of graph}

Let $\{ X_{t_1}, ..., X_{t_{n_x}}\}$ be an extraction of $n_x$ states from LRW $\{ X_t : t \geq 0 \}$ at equilibrium, where $t_1, ..., t_{n_x}$ do not need to be consecutive. Similarly, let $\{ Y_{\tau_1}, ..., X_{\tau_{n_y}}\}$ be those of a separate independent LRW $\{ Y_{\tau} : \tau \geq 0 \}$. We have
\[
E(m) \coloneqq E\Big( \sum_{i=1}^{n_x} \sum_{j=1}^{n_y} \frac{1}{d_h +r} \mathbb{I}(Y_{\tau_j} = h \mid X_{t_i} = h) \Big) 
= \frac{n_x n_y}{2R + rN} 
\]
where $m$ can be referred to as the number of collisions by $\{ X_t \}$ and $\{ Y_{\tau} \}$. This yields 
\begin{equation} \label{CR}
\hat{R}_1 = (n_x n_y/m - rN)/2
\end{equation}
as a \emph{capture-recapture (CR)} estimator. Meanwhile, since
\[
E(\bar{d}_w) \coloneqq E\left( \frac{\sum_{i=1}^{n_x} \frac{d_{X_{t_i}}}{d_{X_{t_i}} +r} + \sum_{j=1}^{n_y} \frac{d_{Y_{\tau_j}}}{d_{Y_{\tau_j}} +r}}{\sum_{i=1}^{n_x} \frac{1}{d_{X_{t_i}} +r} + \sum_{j=1}^{n_y} \frac{1}{d_{Y_{\tau_j}} +r}} \right) \approx \frac{2R}{N}
\]
we obtain another \emph{generalised ratio (GR)} estimator of $R$ as
\begin{equation} \label{GR}
\hat{R}_2 = N \bar{d}_w/2
\end{equation}
Finally, combining the two, we obtain a GR-CR estimator, denoted by $\hat{R}_3$, by incorporating an estimator $\hat{N}$ although $N$ is known, i.e.
\begin{equation} \label{Hajek}
\begin{cases} 2m\hat{R}_3 + rm\hat{N} = n_x n_y \\ -2\hat{R}_3 + \bar{d}_w \hat{N} =0 \end{cases}
\Rightarrow \quad \hat{N} = \frac{n_x n_y}{m(r + \bar{d}_w)}
\quad\text{and}\quad \hat{R}_3 = \frac{n_x n_y \bar{d}_w}{2m(r + \bar{d}_w)} 
\end{equation}

\subsection{Finite-order graph parameters and totals}

Below we first clarify generally the basis of estimation under LRW sampling, before we define the estimators of finite-order graph parameters and totals.

\subsubsection{Basis of estimation}

Let $M = \{ X_{t+1}, ..., X_{t+q}\}$ be a set of successive states by LRW. At equilibrium, we have
\begin{equation} \label{piM}
\pi_M = \mbox{Pr}(X_{t+1}, ..., X_{t+q}) = \pi_{X_{t+1}} \prod_{i=1}^{q-1} p_{X_{t+i} X_{t+i+1}}
\end{equation}
where $p_{X_{t+i} X_{t+i+1}}$ is the transition probability from $X_{t+i}$ to $X_{t+i+1}$, which is known regardless the unvisited nodes of $G$. We shall refer to $\pi_M$ by \eqref{piM} as the \emph{stationary successive sampling probability (S3P)}, where the probability $\pi_{X_{t+1}}$ is known exactly up to the proportionality constant $2R+rN$, or it can be estimated via $\hat{R}$ in which case we write $\hat{\pi}_{X_{t+1}}$ and $\hat{\pi}_M$.  

Apart from the actual sequences in a LRW, the S3P \eqref{piM} is also available for any \emph{hypothetical} sequence $M$, given $M\subseteq s$, because the part of the transition probability matrix corresponding to $s\times s$ is already observed. For instance, given the actual sequence $(X_t, X_{t+1}, X_{t+2}) = (1, 2, 3)$ in Figure \ref{fig:LRW}, we can as well calculate $\pi_3 p_{32} p_{21}$ of a sampling sequence $(X_t, X_{t+1}, X_{t+2}) = (3, 2, 1)$ hypothetically. Given the seed sample $s$, all the sequences for which the S3P is available belong to the \emph{generating (sequences of) states} of LRW sampling given by
\[
\mathcal{C}_s = \{ M : M\subseteq s \}
\]
The subset containing the parts of the actual walk is given by 
\[
\mathcal{C}_w = \big\{ \{ X_t, ..., X_{t+q} \} : 0 \leq t \leq t+q \leq T \big\}
\]
For example, suppose LRW with $X_0 = 1$ and $X_T= 4$ in Figure \ref{fig:LRW}, for which we have
\[
\mathcal{C}_w = \big\{ \{ 1\}, \{ 1, 2\}, \{ 1,2,3\}, \{ 1,2,3, 4\}, \{ 2\}, \{ 2,3\}, \{ 2,3,4\}, \{ 3\}, \{ 3,4\}, \{ 4\} \big\} 
\]

Let the sample motif $\kappa$, $\kappa \in \Omega_s$, be observed from the \emph{actual sampling sequence of states (AS3)} $s_{\kappa} = (X_t, ..., X_{t+q})$, for some time step $t$ and $q = |s_{\kappa}| -1$. An \emph{equivalent sampling sequence of states (ES3)}, denoted by $\tilde{s}_{\kappa} \sim s_{\kappa}$ including $s_{\kappa}  \sim s_{\kappa}$, is any possible sequence of states of the same length $|\tilde{s}_{\kappa}| = |s_{\kappa}|$, such that the motif $\kappa$ would be observed if $(X_t, X_{t+1}, ..., X_{t+q}) = \tilde{s}_{\kappa}$ but not based on any subsequence of $\tilde{s}_{\kappa}$. In other words, a motif $\kappa$ observed from AS3 $s_{\kappa}$ can be sampled otherwise sequence-by-sequence, in which respect its ES3 set
\[
F_{\kappa} = \{ \tilde{s}_{\kappa} : \tilde{s}_{\kappa} \sim s_{\kappa} \}
\]
constitute the multiplicity of access to $\kappa$ under LRW sampling, similarly to the concept of multiplicity under indirect sampling (Birnbaum and Sirken, 1965). 

Take the triangle $\kappa$ with $M = \{ 1, 2, 3\}$ in Figure \ref{fig:LRW}, where $s_{\kappa} = (X_t, X_{t+1}) = (1,2)$ is an AS3, for which we have $F_{\kappa} = \{ (1,2)$, (2,1), (1,3), (3,1), (2,3), (3,2)$\}$, since the same $\kappa$ would be observed if $(X_t, X_{t+1})$ is any of these 6 sequences in $F_{\kappa}$.

\subsubsection{Estimators}
  
Given LRW sampling from $G$, choose a seed sample of successive states at equilibrium which is of size $n$, denoted by $s = \{ X_1, ..., X_n\}$. Obtain the corresponding motif sample $\Omega_s$, the generating states $\mathcal{C}_s$ and its subset $\mathcal{C}_w$.  

For any $\kappa \in \Omega_s$ observed from AS3 $s_{\kappa} = (X_t, ..., X_{t+q})$, where $s_{\kappa} \in \mathcal{C}_w$, let $F_{\kappa}$ be its ES3 set. Let $M$ be a possible sequence of states $(X_t, ..., X_{t+q})$. Let the sampling indicator $\delta_M =1$ if $M$ is realised and 0 otherwise, with the associated S3P $\pi_M$, over hypothetically repeated sampling. Let the observation indicator $I_{\kappa}(M) = 1$ if $M\in F_{\kappa}$ and 0 otherwise. Let $w_{M\kappa}$ be the \emph{incidence weight} for any $M \in F_{\kappa}$, where $w_{M\kappa} =0$ if $M \not \in F_{\kappa}$, such that for any $\kappa \in \Omega$, we have
\[
\sum_{M\in F_{\kappa}} w_{M\kappa} = \sum_{M} I_{\kappa}(M) w_{M\kappa} = 1 
\] 
An estimator of the graph total \eqref{theta} is given by
\begin{equation} \label{LRW-IWE}
\hat{\theta}(X_t, ..., X_{t+q}) = \sum_{\kappa \in \Omega} \sum_{M} \frac{\delta_M}{\pi_M} I_{\kappa}(M) w_{M\kappa} y_{\kappa}
\end{equation}
which is unbiased for $\theta$, since 
\[
E\big( \hat{\theta} (X_t, ..., X_{t+q}) \big) = \sum_{\kappa \in \Omega} y_{\kappa} \sum_{M} I_{\kappa}(M) w_{M\kappa} 
= \sum_{\kappa \in \Omega} y_{\kappa}  
\]

Although there are infinitely many possibilities in principle, the two basic choices of $w_{M\kappa}$ are the \emph{multiplicity weight} given by 
\begin{equation} \label{eqw}
w_{M\kappa} = 1/ |F_{\kappa}|
\end{equation}
for any $M$ in $F_{\kappa}$, and the \emph{proportional-to-probability weight (PPW)} given by 
\begin{equation} \label{ppw}
w_{M\kappa} = \pi_M / \pi_{(\kappa)} \qquad\text{and}\qquad \pi_{(\kappa)} = \sum_{M\in F_{\kappa}} \pi_M 
\end{equation}
Notice that the PPW \eqref{ppw} is only possible if $F_{\kappa} \subseteq \mathcal{C}_s$. 
 
Given the AS3 $s_{\kappa}$ is of the order $|s_{\kappa}| = q+1$ and $|s|=n$, there are at most $n-q$ possible estimators based on $(X_t, ..., X_{t+q})$ by \eqref{LRW-IWE}, denoted by $\hat{\theta}_t$ for $t = 1, ..., n-q$. Let $\mathbb{I}_t = 1$ indicate that $(X_t, ..., X_{t+q})$ can lead to the observation of at least one motif in $\Omega$, and $\mathbb{I}_t =0$ otherwise. An estimator of $\theta$ combining all the $\hat{\theta}_t$ is given by
\begin{equation} \label{LRW-theta}
\hat{\theta} = \Big( \sum_{t=1}^{n-q} \mathbb{I}_t \hat{\theta}_t \Big) / \Big( \sum_{t=1}^{n-q} \mathbb{I}_t \Big) ~.
\end{equation}
 
Generally, the estimators \eqref{LRW-IWE} and \eqref{LRW-theta} can only be calculating using $\hat{\pi}_M$ instead of $\pi_M$, which requires an estimate of $R$. However, it is also possible to use $\pi_M$ directly to estimate graph parameters regardless the proportionality constant $2R + rN$. For instance, let $\mu = \theta/N_{\Omega}$, where $N_{\Omega} = |\Omega|$. A generalised ratio estimator is given by 
\[
\hat{\mu} = \hat{\theta}/\hat{N}_{\Omega}
\] 
where $\hat{N}_{\Omega}$ is also given by \eqref{LRW-IWE} and \eqref{LRW-theta} but with $y_{\kappa} \equiv 1$. In the special case of $\Omega = U$, this reduces to the GR estimator of population mean $\sum_{i\in U} y_i/N
$ (Thompson, 2006), as a 1st-order graph parameter.
Generally one can estimate any $\mu$ that is a function of graph totals, as long as the plug-in $\hat{\mu}$ is invariant when each total involved is replaced by its estimator \eqref{LRW-theta}.

\section{Illustration} \label{illustration}

Let $G = (U, A)$ be simple and undirected with $N = |U| = 100$. Let $y=1$ be the value associated with the first 20 nodes $i=1, ..., 20$, to be referred to as the cases, and let $y=0$ be the value for the rest 80 nodes or the noncases. 
The edges $(ij)$ are generated randomly, with different probabilities according to $y_i + y_j=2$, 1 or 0. The resulting graph has 299 edges, $R =|A| = 299$; the cases have an average degree 13.5, and the noncases have an average degree 4.1. This valued graph $G$ with a mild core-periphery structure is fixed for the illustrations below.

\subsection{Convergence to equilibrium} 

Let $p_{t,i} = \mbox{Pr}(X_t = i)$, for $i\in U$. We have $p_{t,i} \rightarrow \pi_i \propto d_i +r$ by LRW, as $t\rightarrow \infty$. How quickly a walk reaches equilibrium is affected by the selection of the initial state $X_0$. In the extreme case the walk is at equilibrium from the beginning if $\mbox{Pr}(X_0 = i) = \pi_i$ for $i\in U$. To explore the speed of convergence, consider two other choices $p_{0,i} = \mbox{Pr}(X_0 = i) \equiv 1/N$ or $p_{0,1}  = 1$. To track the convergence empirically, we use $B$ independent simulations of LRW to estimate 
$E(Y_t) = \sum_{i\in U} p_{t,i} y_i$, which is targeted at the equilibrium expectation $E(Y_{\infty}) = \sum_{i\in U} \pi_i y_i$.

\begin{table}[ht]
\centering
\caption{$E(Y_t)$ by $t$, $r$ and initial $X_0$, $10^5$ simulations of LRW with $w=1$}
\begin{tabular}{l | cccc | cccc} \toprule
& \multicolumn{4}{|c|}{$r=1,~ E(Y_{\infty}) = 0.415$} & \multicolumn{4}{c}{$r=0.1,~ E(Y_{\infty}) =0.447$} \\ \cline{2-9}
Initiation & $t=1$ & $t=4$ & $t=8$ & $t=16$ & $t=1$ & $t=4$ & $t=8$ & $t=16$ \\ \hline
$p_{0,i} = \pi_i$ & 0.420 & 0.419 & 0.421 & 0.414 & 0.449 & 0.451 & 0.450 & 0.448 \\
$p_{0,i} = 1/N$ & 0.341 & 0.394 & 0.408 & 0.413 & 0.355 & 0.412 & 0.433 & 0.449 \\
$p_{0,1} = 1$ & 0.714 & 0.481 & 0.433 & 0.413 & 0.741 & 0.529 & 0.471 & 0.448 \\ \bottomrule
\end{tabular} \label{tab:cnv}
\end{table}

Table \ref{tab:cnv} shows the results for $t = 1, 4, 8, 16$ and $r = 1, 0.1$, each based on $10^5$ simulations of LRW with $w=1$. The equilibrium expectation $E(Y_{\infty})$ varies with $r$. Of the two choices here, the value $r=1$ yields the degree+1 walk, whereas the value $r=0.1$ tunes the walk closer to pure RW. It can be seen that the walk stays at equilibrium if $p_{0,i} = \pi_i$. Under the current set-up, convergence to equilibrium can apparently be expected at $t=16$, whether the initial $X_0$ is selected completely randomly from $U$ given $p_{0,i} = 1/N$, or fixed at $i=1$ given $p_{0,1} = 1$. Neither does the speed of convergence vary much for the values of $r$ here.

\subsection{Estimation of case prevalence} 

Consider the 1st-order parameter $\mu = \sum_{i\in U} y_i/N$. Let $s = \{  X_0, ..., X_T\}$, where $X_0$ is simply drawn with $\pi_i$ now that convergence to equilibrium is quick and what matters for the estimation is mainly the size of the seed sample size itself. Let $\psi$ be the \emph{traverse} of the walk, given as the ratio between the number of distinct nodes visited by the walk and $N$, which indicates how extensively the walk has travelled through $G$.

Given $(T,r,w)$, generate LRW independently $B$ times, each resulting in a replicate of $\hat{\mu}$, the mean of which is an estimate of $E(\hat{\mu})$ by LRW sampling at equilibrium, denoted by Mean$(\hat{\mu})$, and the square root of their empirical variance--SD$(\hat{\mu})$--is an estimate of the standard error of $\hat{\mu}$.

\begin{table}[ht]
\centering
\caption{Estimation of case prevalence $\mu = 0.2$, $10^3$ simulations}
\begin{tabular}{c|ccc|ccc} \toprule
& \multicolumn{3}{|c|}{$r=0.1$} & \multicolumn{3}{c}{$r=6$} \\ \cline{2-7}
$T=50$ & Mean$(\hat{\mu})$ & SD$(\hat{\mu})$ & Mean($\psi$) & Mean$(\hat{\mu})$ & SD$(\hat{\mu})$ & Mean($\psi$) \\ \hline
$w=1$ & 0.211 & 0.093 & 0.321 & 0.203 & 0.070 & 0.383  \\ \hline
$w=0.01$ & 0.210 & 0.079 & 0.367 & 0.203 & 0.067 & 0.395   \\ \midrule
& \multicolumn{3}{|c|}{$r=0.1$} & \multicolumn{3}{c}{$r=6$} \\ \cline{2-7}
$T=100$ & Mean$(\hat{\mu})$ & SD$(\hat{\mu})$ & Mean($\psi$) & Mean$(\hat{\mu})$ & SD$(\hat{\mu})$ & Mean($\psi$) \\ \hline
$w=1$ & 0.204 & \textcolor{purple}{0.066} & 0.499 & 0.200 & 0.049 & 0.607  \\ \hline
$w=0.01$ & 0.203 & \textcolor{purple}{0.054} & 0.559 & 0.201 & 0.048 & 0.617 \\ \bottomrule
\end{tabular} \label{tab:node}
\end{table}

Table \ref{tab:node} gives the results for $T=50$ or 100, $r=6$ or 0.1, and $w=1$ or 0.01, each based on $10^3$ simulations. Since the case nodes have larger degrees than the noncase nodes in $G$, the stationary probability $\pi_i$ depends on $y_i$ and LRW sampling is informative in this sense. Nevertheless, this is not an issue for design-based estimation of graph parameters, and the consistency of $\hat{\mu}$ is already evident at $T = 50$.  

Mean($\psi$) shows that in the current setting a TRW (i.e. $w=1$) of length $T=50$ is expected to visit only about a third of all the nodes in $G$, and one of length $T=100$ can reach about half of the nodes. How quickly TRW traverses the graph depends on the chance of visiting isolated nodes by random jumps as well as the chance of backtracking to the previous adjacent node, the probabilities of both are reduced given small $r$ and $w$. By reducing the chance of backtracking, LRW with $w=0.01$ can speed up the traverse. However, increasing the chance of random jumps, e.g. $r=6$ instead of $r=0.1$, can speed up the traverse even more. Note that since the average degree $2R/N$ is about 6 in $G$ here, setting $r=6$ in \eqref{LRW} makes a random jump on average at least as probable as an adjacent move at each time step. 

Given that we are estimating a 1st-order parameter here, every distinct node encountered contributes more effectively to the estimation than revisiting a node. As can be seen from SD($\hat{\mu}$), increasing $r$ from 0.1 to 6 is more beneficial to the sampling efficiency than reducing $w$ from 1 to 0.01. Nevertheless, reducing $w$ can greatly reduce the sampling variance of TRW at $r=0.1$, e.g. by about 1/3 given $T=100$ as highlighted in Table \ref{tab:node}.

\subsection{Estimation of graph size} 
  
Consider the three estimators \eqref{CR}, \eqref{GR} and \eqref{Hajek} of the 2nd-order graph total $R$. We use simply $n_x = n_y = n$ for each pair of independent $\{ X_t \}$ and $\{ Y_t \}$. Table \ref{tab:size} gives the results for $n=50$ or 100, $r=6$ or 0.1, and $w=1$ or 0.01, each based on $10^4$ simulations, where an estimate of the expectation of each $\hat{R}$ is given together with its simulation error in the parentheses.
  
\begin{table}[ht]
\centering
\caption{Estimation of graph size $R= 299$ (SD in parentheses), $10^4$ simulations}
\begin{tabular}{c|ccc|ccc} \toprule
& \multicolumn{3}{|c|}{$r=0.1$} & \multicolumn{3}{c}{$r=6$} \\ \cline{2-7}
$n=50$ & $\hat{R}_1$ & $\hat{R}_2$ & $\hat{R}_3$ & $\hat{R}_1$ & $\hat{R}_2$ & $\hat{R}_3$ \\ \hline
$w=1$ & 344 (1.64) & 306 (0.41) & 344 (\textcolor{purple}{1.61}) & 333 (1.61) & 300 (0.26) & 316 (0.81) \\ \hline
$w=0.01$ & 319 (0.93) & 304 (0.36) & 319 (\textcolor{purple}{0.91}) & 329 (1.43) & 300 (0.26) & 314 (0.72) \\ \midrule
& \multicolumn{3}{|c|}{$r=0.1$} & \multicolumn{3}{c}{$r=6$} \\ \cline{2-7}
$n=100$ & $\hat{R}_1$ & $\hat{R}_2$ & $\hat{R}_3$ & $\hat{R}_1$ & $\hat{R}_2$ & $\hat{R}_3$ \\ \hline
$w=1$ & 311 (0.61) & 303 (0.31) & 311 (0.60) & 307 (0.70) & 299 (0.19) & 303 (0.36)  \\ \hline
$w=0.01$ & 305 (0.42) & 302 (0.27) & 305 (0.41) & 306 (0.65) & 299 (0.18) & 302 (0.33) \\ \bottomrule
\end{tabular} \label{tab:size}
\end{table}

The traverse for given $(T,r,w)$ is already reported earlier. At $n=50$, the average number of collisions between $\{ X_t \}$ and $\{ Y_t \}$ is about 4 given $r=0.1$, which is about halved given $r=6$; at $n=100$, it is about 16 given $r=0.1$, and about halved given $r=6$. Reducing $w=1$ to 0.01 improves the CR estimator $\hat{R}_1$ more than increasing $r=0.1$ to 6. The improvement is about the same with either device for the GR-CR estimator $\hat{R}_3$. The GR estimator is the best of the three here, because the number of collisions is still quite low whether $n=50$ or 100. 

Regardless the estimator, LRW sampling with $w=0.01$ is more efficient than TRW sampling (i.e. $w=1$). The gain is largest for the two CR-related estimators, which is most useful when the TRW sampling variance is largest, e.g. as highlighted in Table \ref{tab:size}.

\subsection{Estimation of a 3rd-order graph total and a related parameter} 

Let $\theta$ be the total number of triangles in $G$, and $\theta_1$ that of the case triangles, i.e. $\prod_{i\in M} y_i = 1$ if $[M]$ is triangle. The 3rd-order graph parameter $\mu = \theta_1/\theta$ is a measure of the transitivity among cases compared to the overall transitivity in $G$. We have $(\theta, \theta_1, \mu) = (170, 140, 0.824)$ here.

Let $\Omega$ contain all the triangles in $G$. Given $(X_t, X_{t+1}) = (i,j)$ of two adjacent nodes under LRW, one observes all the triangles involving $i$ and $j$. For instance, both the triangles of $\{ 1,2,3\}$ and $\{ 1,2,\diamond\}$ in Figure \ref{fig:LRW} are observed from $(X_t, X_{t+1}) = (1,2)$, such that for \eqref{LRW-IWE} we have $\delta_M = 1$ if $M = (1,2)$ and $I_{\kappa}(M) =1$ if $\kappa$ is either of these two triangles. 

Whereas $\mu$ can be estimated using $\pi_M$ directly, $\hat{\pi}_M = \pi_M(\hat{R})$ is needed for $\theta$ or $\theta_1$.
Given any $(X_t, X_{t+1}) = (i,j)$ as the AS3 of a triangle $\kappa$, the ES3s are the six possible adjacent moves along the triangle. The incidence weight is the same, i.e. $w_{M\kappa} \equiv 1/6$, either by \eqref{eqw} or \eqref{ppw}, because the S3P \eqref{piM} of LRW \eqref{LRW} is a constant of $M =(i,j)$, i.e.
\[
\pi_M = \pi_i p_{ij} \propto 1 + r/N 
\] 

\begin{table}[ht]
\centering
\caption{Estimation of $\mu = \theta_1/\theta = 0.824$ ($B=10^3$) and $\theta = 170$ ($B =10^4$)}
\begin{tabular}{c|ccc|ccc} \toprule
 & \multicolumn{3}{|c|}{$r=0.1$} & \multicolumn{3}{c}{$r=6$} \\ \cline{2-7}
$T=50$ & Mean$(\hat{\mu})$ & SD$(\hat{\mu})$ & $\hat{\theta}$ & Mean$(\hat{\mu})$ & SD$(\hat{\mu})$ & $\hat{\theta}$ \\ \hline
$w=1$ & 0.796 & 0.108 & 185 (0.91) & 0.796 & 0.125 & 174 (0.84) \\ \hline
$w=0.01$ & 0.797 & 0.101 & 180 (0.78) & 0.799 & 0.124 & 175 (0.82) \\ \midrule
& \multicolumn{3}{|c|}{$r=0.1$} & \multicolumn{3}{c}{$r=6$} \\ \cline{2-7}
$T=100$ & Mean$(\hat{\mu})$ & SD$(\hat{\mu})$ & $\hat{\theta}$ & Mean$(\hat{\mu})$ & SD$(\hat{\mu})$ & $\hat{\theta}$ \\ \hline
$w=1$ & 0.825 & \textcolor{purple}{0.082} & 177 (0.63) & 0.815 & 0.101 & 172 (0.59) \\ \hline
$w=0.01$ & 0.826 & \textcolor{purple}{0.073} & 175 (0.55) & 0.811 & 0.096 & 173 (0.58) \\ \bottomrule
\end{tabular} \label{tab:triangle}
\end{table}

Table \ref{tab:triangle} gives the results for $T=50$ or 100, $r=6$ or 0.1, and $w=1$ or 0.01. The number of simulations is $B=10^3$ for $\mu$ and $B=10^4$ for $\theta$. For $\hat{\theta}$ an estimate of Mean($\hat{\theta}$) is given together with its simulation error in the parentheses. The speed of convergence is noticeably quicker for $\hat{\theta}$ with $r=6$ instead of 0.1, but does not differ much for $\hat{\mu}$. On the one hand, intuitively a large chance of random jump is unlikely to be efficient as the order of motif increases; on the other hand, the total estimator $\hat{\theta}$ uses $\pi_M(\hat{R})$ instead of $\pi_M$, and its convergence rate seems to have benefitted from the correlation with $\hat{R}$. Again, the extra flexibility of LRW can be useful in certain situations, e.g. as highlighted in Table \ref{tab:triangle}.

\section{Some remarks on future research} \label{remark}

Theoretical results on the convergence rate only seem to be available for pure random walks in connected graphs; see e.g Boyd et al. (2004), Ben-Hamou et al. (2019). The empirical results  here suggest that longer walks are needed for estimating totals using $\hat{\pi}_M$ than estimating parameters using $\pi_M$ directly, but theoretical results are lacking at the moment.

What is a good choice of $w$ can conceivably vary for different motifs. Using a small $w$ with nearly no backtracking if possible seems to be reasonable in all the illustrations considered here, especially when the walk has a relatively low probability for random jumps. More systematic investigation is needed in this respect.

The LRW can readily be generalised to \emph{$q$-lagged} random walk (LRW$_q$). Given any initial states $(X_0, X_1, ..., X_q)$, let the current and next \emph{tuple} (of ordered states) be $\bs{x}_t = (X_{t-q}, M)$ and $\bs{x}_{t+1} = (M, X_{t+1})$, respectively, where $M = (X_{t-q+1}, ..., X_t)$, with Markovian transition from $\bs{x}_t$ to $\bs{x}_{t+1}$ (and $X_{t+1}$).
It is intriguing to find the stationary distribution of $X_t$ by LRW$_q$.

The choice of incidence weights $w_{M\kappa}$ over $F_{\kappa}$ in nontrivial generally, although the two basic choices \eqref{eqw} or \eqref{ppw} happen to coincide for the triangle motif. For instance, let $M =\{i,j,g,h\}$ form a 4-cycle, which can be observed based on AS3 consisting of two successive adjacent moves, say, $(X_t, X_{t+1}, X_{t+2}) = (i, j, g)$. The corresponding S3P \eqref{piM} is 
\[
\pi_M = \pi_i p_{ij} p_{jg} = \frac{1}{d_j +r} \big( 1 + \frac{r}{N} \big)^2 
\]
such that the PPW \eqref{ppw} would differ to the multiplicity weight \eqref{eqw}. Since the PPW requires $F_{\kappa} \subseteq \mathcal{C}_s$ but not the equal weights, the latter can be applied to a larger number of sample motifs. In practice one can calculate several estimators using different incidence weights and choose among them according to a given situation.

At any moment the traverse of a walk is either known given $N$ or can be estimated together with $\hat{R}_3$. A prediction form of \eqref{LRW-theta} would be helpful, which takes into account the fact that uncertainty only arises due to the part of graph that is yet unobserved.

\end{document}